\def\setzerodotbox#1{%
   \setbox\zerodotbox@=\hbox{%
       \dimen@=#1\xydashw@
       \kern-\dimen@ \vrule width2\dimen@ height\dimen@ depth\dimen@
   }%
}
     \newenvironment{problist}
    \newtheorem{thm}{Theorem}
    \newtheorem{lem}[thm]   {Lemma}
    \newtheorem{cor}[thm]   {Corollary}
    \newtheorem{prop}[thm]  {Proposition}
    \newcommand{\term}[1]   {{\bf #1}\index{#1}}
\newcommand{\?}{\, ?\, }
\newcommand{\st}{\mid}
    \newcommand{\A}        {\mathcal{A}}
                \newcommand{\K}        {\mathcal{K}}
\newcommand{\R}  {\mathcal{R}}
\newcommand{\U}{\mathcal{U}}
    \newcommand{\ZZ}    {\mathbb{Z}}
    \newcommand{\NN}    {\mathbb{N}}
    \newcommand{\FF}    {\mathbb{F}}
    \newcommand{\s}     {\Sigma}
    \newcommand{\smsh}  {\wedge}
    \newcommand{\om}    {\Omega}
    \newcommand{\wdg}   {\vee}
    \newcommand{\twdl}  {\widetilde}
    \newcommand{\sseq}  {\subseteq}
\DeclareMathOperator{\limone}{lim^1}
            \DeclareMathOperator{\cl}    {cl}
 \DeclareMathOperator{\im}    {Im}
    \DeclareMathOperator{\Hom}   {Hom}
    \DeclareMathOperator{\Ext}   {Ext}
 \DeclareMathOperator{\Ph}    {Ph}
        \DeclareMathOperator{\map}    {map}
\newcommand{\WedgesOfSpheres}{\mathcal{W}}
\begin{document}
	
	


\title{A Simple Homotopy-Theoretical 
Proof of the Sullivan Conjecture}


\author{Jeffrey Strom}
\address{Department of Mathematics\\
Western Michigan University\\
Kalamazoo, MI\\
49008-5200 USA}
\email{Jeff.Strom@wmich.edu}	

\subjclass[2010]
{Primary 55S37, 55R35; 
Secondary 55S10}  

\keywords{Sullivan conjecture,   
homotopy limit, cone length, phantom map, Massey-Peterson tower,
$T$ functor, Steenrod algebra, unstable module}


\begin{abstract}
We give a new proof, using comparatively 
simple techniques, 
of the Sullivan conjecture:  
$\map_*(B\ZZ/p, K) \sim *$ for every 
finite-dimensional CW complex $K$.
\end{abstract}

\maketitle
\pagestyle{myheadings}
\markboth{{J. Strom}}
{{A simple 
homotopy-theoretical
proof of the Sullivan conjecture}}


\section*{Introduction}

Haynes Miller proved the Sullivan conjecture in \cite{MR750716}.
Another proof
can be deduced from the extension due to Lannes
\cite{MR1179079}, 
whose proof depends on the insights into unstable 
modules and algebras afforded by the $T$ functor.
There are three main problems with these proofs:  
 they are very complicated;  
the content is almost entirely encoded in pure algebra; and 
  it is difficult to tease out the fundamental properties of
$B\ZZ/p$ that make the proofs work.

Our purpose in this paper is  
to offer a new proof avoiding these complaints.
The Sullivan conjecture is an 
easy consequence of the following 
main theorem.

\begin{thm}
\label{thm:main}
Let $X$ be a CW complex of finite type, and assume 
that $\twdl H^*(X; \ZZ[{1\over p}]) =0$.
Then each of the following conditions implies the 
next.
\begin{enumerate}
\item
$\twdl H^*(X; \FF_p)$ is a reduced unstable $\A_p$-module
and 
$\twdl H^*(X; \FF_p) \otimes J(n)$ is an injective unstable 
$\A_p$-module for   all  $n\geq 0$.
\item
$\Ext_\U ^s ( \s^{2m+1}\FF_p, \twdl H^*(\s^{s+t} X) ) = 0$
for every $s, t\geq 0$ and all   $m\geq 0$.
\item
$\map_*(X, S^{2m+1} ) \sim *$ for 
all sufficiently large  $m$.
\item
$\map_*(X, K) \sim *$ for all simply-connected finite complexes $K$.
\item
$\map_*(X, \bigvee_{i=1}^\infty S^{n_i} ) \sim *$ 
for any countable   set    $\{ n_i \}$ with each $n_i > 1$.
 \item
$\map_*( X, K) \sim *$ for all simply-connected
finite-dimensional CW complexes $K$. 
\item
$\map_*( X, K) \sim *$ for all simply-connected CW complexes
 $K$ with $\cl_{\WedgesOfSpheres}(K)< \infty$.
\end{enumerate}
Furthermore, if  
 $\pi_1(X)$ has no perfect quotient groups (that is, if $\pi_1(X)$ is 
 \term{hypoabelian}), then 
the simply-connected hypotheses on $K$ are not needed.
\end{thm}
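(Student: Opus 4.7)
The plan is to walk down the chain of implications in order; the genuinely new work lies in $(1)\Rightarrow(2)$, while the subsequent implications package standard homotopy-theoretic machinery.

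For $(1)\Rightarrow(2)$, the task is an algebraic computation of $\Ext_\U^s(\Sigma^{2m+1}\FF_p,\Sigma^{s+t}\twdl H^*(X))$. Since the Brown--Gitler modules $J(n)$ are injective cogenerators of $\U$, the hypothesis that $\twdl H^*(X)\otimes J(n)$ is injective for every $n$ lets one assemble an injective resolution of $\Sigma^{s+t}\twdl H^*(X)$ whose terms take this tensor-product form. The cochain complex computing the Ext then reduces to groups of the form $\Hom_\U(\Sigma^{2m+1}\FF_p,\twdl H^*(X)\otimes J(n))$, and the reducedness of $\twdl H^*(X)$ is exactly the input that rules out these residual Homs. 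I expect this step---where the homotopical consequence is genuinely extracted from the algebraic input---to be the main obstacle of the proof.

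For $(2)\Rightarrow(3)$, I would apply the Massey--Peterson tower of $S^{2m+1}$; its unstable Adams-type spectral sequence converging to $\pi_*\map_*(X,S^{2m+1})$ has $E_2$-page precisely the Ext appearing in (2), which vanishes by hypothesis, and for $m$ sufficiently large the tower converges to force $\map_*(X,S^{2m+1})\sim *$. For $(3)\Rightarrow(4)$, the EHP fibration $S^n\to\Omega S^{n+1}\to\Omega S^{2n+1}$ at $p=2$ and its odd-primary analogue convert contractibility of $\map_*(X,S^{2m+1})$ for large odd $m$ into contractibility of $\map_*(X,S^n)$ for every $n\geq 2$, via the fiber sequence $\map_*(X,S^n)\to\Omega\map_*(X,S^{n+1})\to\Omega\map_*(X,S^{2n+1})$ and a descending induction. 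A finite simply-connected CW complex $K$ is then handled by cellular induction on its cells, using Blakers--Massey to convert each Puppe cofibration into a fiber sequence in an appropriate range. For $(4)\Rightarrow(5)$, the finite-type hypothesis on $X$ makes each skeleton $X^{(n)}$ finite, so every map $X^{(n)}\to\bigvee_i S^{n_i}$ factors through a finite sub-wedge and is null by (4); Milnor's $\limone$ sequence applied to $X=\hocolim X^{(n)}$ (for each $[\Sigma^k X,-]$) then delivers $\map_*(X,\bigvee_i S^{n_i})\sim *$.

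The remaining implications $(5)\Rightarrow(6)\Rightarrow(7)$ proceed by induction on cone length with respect to wedges of spheres. A finite-dimensional simply-connected $K$ carries a cone-length filtration $*=K_0\to K_1\to\cdots\to K_N=K$ whose successive cofibers are wedges of spheres of dimension $>1$, and the same structure exists for any simply-connected $K$ with $\cl_\WedgesOfSpheres(K)<\infty$, now with no dimensional restriction. At each stage the wedge-of-spheres cofiber is killed by (5); a Blakers--Massey-style conversion of the Puppe cofibration into a fiber sequence in an appropriate range, combined with the inductive hypothesis, upgrades contractibility from $\map_*(X,K_{j-1})$ to $\map_*(X,K_j)$. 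The passage from (6) to the infinite-dimensional setting of (7) is handled by approximating $K$ by its finite-dimensional simply-connected sub-complexes together with a further Milnor $\limone$ argument. Finally, for the hypoabelian addendum, the fibration $\twdl K\to K\to B\pi_1(K)$ yields a fibration of pointed mapping spaces; the simply-connected case already covers $\map_*(X,\twdl K)$, while the vanishing of $\map_*(X,B\pi_1(K))$ uses a transfinite-derived-series analysis of homomorphisms $\pi_1(X)\to\pi_1(K)$ for which the hypothesis that $\pi_1(X)$ has no perfect quotient is precisely what is needed, together with the $p$-primary nature of $\twdl H^*(X)$.
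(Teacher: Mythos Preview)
Your outline has the right architecture, but there are genuine gaps at three places where the paper does nontrivial work.

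First, your $(1)\Rightarrow(2)$ is not the paper's argument and, as stated, does not go through. You propose an injective resolution of $\Sigma^{s+t}\twdl H^*(X)$ with terms of the form $\twdl H^*(X)\otimes J(n)$, but there is no reason such a resolution exists: knowing that $H\otimes J(n)$ is injective does not tell you how to embed $\Sigma^{s+t}H$ into modules of that shape. The paper instead works with \emph{free} resolutions and the left adjoint $\overline\tau$ of $H\otimes -$. The injectivity hypothesis makes $\overline\tau$ exact; it commutes with $\Sigma$; it carries free modules to free modules; and since $H^0=0$ it annihilates finite modules. Hence if $P_*\to \Sigma^{2m+1}\FF_p$ is a free resolution, $\overline\tau(P_*)$ is a free resolution of $0$, and the adjunction converts $\Hom_\U(P_*,H\otimes\Sigma^{s+t}\FF_p)$ into $\Hom_\U(\overline\tau(P_*),\Sigma^{s+t}\FF_p)$, which is acyclic. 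Reducedness enters through the properties of $\overline\tau$, not by killing Homs into individual injectives.

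Second, in $(4)\Rightarrow(5)$ you say ``Milnor's $\limone$ sequence then delivers'' contractibility. This hides the whole difficulty: you must show $\limone[\Sigma X_n,\bigvee_i S^{n_i}]=*$, and that tower is neither obviously Mittag--Leffler nor a tower of finite groups. The paper devotes a separate proposition to this. One uses Hilton--Milnor to replace $\bigvee_i S^{n_i}$ by a weak product $P=\prod_\alpha S^{m_\alpha}$ with the same loop space; since the relevant towers are towers of countable groups, Mittag--Leffler for one implies it for the other. For the weak product, $[\Sigma X_n,P]$ is a weak product $\prod_\alpha[\Sigma X_n,S^{m_\alpha}]$, and the rational triviality of $X$ makes each factor-tower Mittag--Leffler with a stabilization index $\lambda(n,m)$ that vanishes for $m>n+1$; taking $\kappa(n)=\max_m\lambda(n,m)$ gives a \emph{uniform} Mittag--Leffler bound for the product. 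Without this argument your implication is incomplete.

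Third, your treatment of the hypoabelian addendum via the fibration $\twdl K\to K\to B\pi_1(K)$ does not work. That fiber sequence of mapping spaces only controls the basepoint component of $\map_*(X,K)$; it says nothing about maps $f:X\to K$ inducing a nontrivial homomorphism on $\pi_1$, and such homomorphisms certainly can exist (take $K$ a lens space and $X=B\ZZ/p$). The paper instead argues by contradiction: if $f_*$ is nontrivial on $\pi_1$, lift $f$ to the covering $L\to K$ with $\pi_1(L)=\im(f_*)$, obtaining $\phi:X\to L$ surjective on $\pi_1$. Since $\pi_1(X)$ has no perfect quotients, $\im(f_*)$ is not perfect, so there is a nontrivial $L\to K(A,1)$ for some abelian $A$; composing shows $\phi$ is nonzero on $H^1$, hence $\Sigma\phi:\Sigma X\to\Sigma L$ is essential. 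But covering spaces inherit finite $\cl_\WedgesOfSpheres$ from $K$, so $\Sigma L$ is simply connected with finite $\cl_\WedgesOfSpheres$, contradicting (7). Thus every $f$ is trivial on $\pi_1$ and lifts to $\twdl K$, where the simply-connected case applies.

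Your remaining steps are broadly aligned with the paper, though the paper packages $(3)\Rightarrow(4)$ and $(5)\Rightarrow(6)\Rightarrow(7)$ via a general cone-length/resolving-class theorem rather than EHP and ad hoc Blakers--Massey arguments; in particular $(6)\Rightarrow(7)$ is a direct cone-length induction with arbitrary wedges of spheres as cofibers, not an approximation by finite-dimensional subcomplexes.
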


The notation $\cl_{\WedgesOfSpheres}(K)$ in part (7) denotes the cone length
of $K$ with respect to the collection $\WedgesOfSpheres$ 
of all wedges 
of spheres; see \cite[\S 1]{MR2029919} for a brief overview
of the main properties of cone length.  
Since (7) implies (3), the last five statements 
are actually equivalent.

Every finite-dimensional CW complex $K$
of course has finite cone length with respect to $\WedgesOfSpheres$.
The extension from part (6) to part (7) is a 
geometric parallel to the passage from 
finite-dimensional spaces in \cite{MR750716}
to spaces with locally finite cohomology in \cite{MR1179079}.

Since it is known that 
 the reduced cohomology 
 $\twdl H^*(B\ZZ/p; \FF_p)$ is reduced 
 \cite[Lem. 2.6.5]{MR1282727}
 and
 that $\twdl H^*(B\ZZ/p; \FF_p) \otimes J(n)$ is an injective unstable 
$\A_p$-module for   all  $n\geq 0$
\cite[Thm. 3.1.1]{MR1282727}, 
we  obtain the Sullivan conjecture as an immediate consequence.

\begin{cor}[Miller]
If $\cl_{\WedgesOfSpheres}( K ) < \infty$ (and, in particular, if
$K$ is  finite-dimensional), then 
$
\map_*(B\ZZ/p, K) \sim *
$.
\end{cor}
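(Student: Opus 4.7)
My plan is to apply Theorem~\ref{thm:main} directly to $X = B\ZZ/p$. First I would note that $\pi_1(B\ZZ/p) = \ZZ/p$ is (trivially) hypoabelian: every quotient of an abelian group is abelian, hence has no nontrivial perfect quotients. So by the final clause of Theorem~\ref{thm:main}, it will suffice to produce condition~(7); once (7) holds, $\map_*(B\ZZ/p, K) \sim *$ follows for every $K$ with $\cl_{\WedgesOfSpheres}(K) < \infty$, with no restriction on connectivity, and this covers every finite-dimensional $K$ in particular.

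Next I would verify the standing hypothesis and condition~(1). Since $B\ZZ/p$ admits a CW structure with one cell in each dimension, it is of finite type. Its reduced integral cohomology is concentrated in positive even degrees and is entirely $p$-primary torsion ($\ZZ/p$ in each such degree); since $\ZZ[{1\over p}]$ is flat over $\ZZ$, the universal coefficient formula gives $\twdl H^*(B\ZZ/p; \ZZ[{1\over p}]) = 0$. Condition~(1) is then the conjunction of two purely algebraic facts that I would import from Schwartz's book: $\twdl H^*(B\ZZ/p; \FF_p)$ is a reduced unstable $\A_p$-module by \cite[Lem. 2.6.5]{MR1282727}, and $\twdl H^*(B\ZZ/p; \FF_p) \otimes J(n)$ is injective in $\U$ for every $n \geq 0$ by \cite[Thm. 3.1.1]{MR1282727}.

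With~(1) in place, Theorem~\ref{thm:main} immediately yields~(7), which is precisely the statement of the corollary. The ``hard part'' is really not in this deduction at all: all of the genuine difficulty has been repackaged either into Theorem~\ref{thm:main} itself (the geometric and homotopy-theoretical content that this paper supplies) or into the two imported reducedness and injectivity facts about $\twdl H^*(B\ZZ/p; \FF_p)$ (the purely algebraic content). Once both are available, the corollary is a one-line application, which is exactly the organizational point advertised in the introduction: isolate the properties of $B\ZZ/p$ that matter, and let Theorem~\ref{thm:main} do the homotopical work.
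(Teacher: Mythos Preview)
Your proposal is correct and matches the paper's own argument essentially line for line: verify the standing hypotheses on $X = B\ZZ/p$, import the reducedness and injectivity facts from \cite{MR1282727} to get condition~(1), invoke the hypoabelian clause to drop the simply-connected restriction, and read off condition~(7). The only added value over the paper is that you spell out the finite-type and $\ZZ[{1\over p}]$-acyclicity checks explicitly, which the paper leaves implicit.
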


The claim that our proof is \textit{simple} should be justified:
we make no use whatsoever of spectral sequences, 
except implicitly in making use of the well-known 
cohomology of Eilenberg-Mac Lane spaces;
 the existence of
unstable \textit{algebras} over the Steenrod algebra is
mentioned only to make sense of Massey-Peterson towers; 
finally, the only homological algebra in this paper is the usual 
abelian kind.  
It has  seemed appropriate at points to emphasize the 
simplicity of the present approach by laying out some results
that might just as well have been cited from their sources.

%
%

\medskip

\paragraph{\bf Acknowledgement}
Many thanks are due to John Harper for pointing out that I had the raw 
materials for a proof of the full Sullivan conjecture,  and for 
bringing the paper \cite{MR764593} to my attention.

\section{Preliminaries}

We begin by reviewing some preliminary material on
the category $\U$ of unstable $\A_p$-algebras, 
  Massey-Peterson towers and  phantom maps.

\subsection{Unstable Modules over the Steenrod Algebra}

\label{subsection:U}

The cohomology functor $H^*(\?; \FF_p)$
takes its values in the category $\U$ of   unstable
modules and their homomorphisms.  
An \term{unstable module} over the Steenrod algebra $\A_p$
is a graded $\A_p$-module $M$ satisfying   $P^I(x) = 0$ if 
$e(I) > |x|$, where $e( I )$ is
the excess of $I$ and $|x|$ is the degree
of $x\in M$.     
%
We begin with some basic algebra of unstable modules, 
all of which is (at least implicitly) in \cite{MR1282727}.

\medskip

\paragraph{\bf Suspension of Modules}
An unstable module $M\in \U$ has a \term{suspension}
 $\s M\in \U$
given by $(\s M)^n  = M^{n-1}$.  The functor $\s : \U \to \U$
has a left adjoint $\om$ and a right adjoint $\twdl \s$.  A module
$M$  
is called \term{reduced} if $\twdl \s M = 0$.

\medskip

\paragraph{\bf Projective and Injective Unstable Modules}
In the category $\U$, there are free modules
$F(n) = \A_p/ E(n)$, where $E(n)$ is the smallest left
ideal containing all Steenrod powers $P^I$
with excess $e(I) > n$.   It is easy to see that 
the assignment $f\mapsto f([1])$ defines 
natural isomorphisms
\[
\Hom_\U (F(n) , M) \xrightarrow{\, \cong\, } M^n .
\] 
This property defines $F(n)$ up to 
natural isomorphism, and shows that $F(n)$ deserves to 
be called a \term{free} module on a single generator of
dimension $n$.  More generally, the free module on 
a set $X= \{ x_\alpha\}$ with $|x_\alpha| = n_\alpha$
is (up to isomorphism) the sum $\bigoplus F(n_\alpha)$
(see \cite[\S 1.6]{MR1282727} for details).
 
 A graded $\FF_p$-vector space $M$ 
is   of \term{finite type} if  
  $\dim_{\FF_p}( M^k) < \infty$  for each $k$.
  Since $\A_p$ is of finite type, so is $F(n)$.
 
 The functor   which takes $M\in \U$ and returns 
the dual $\FF_p$-vector space $(M^n)^*$ is representable:
there is a module $J(n) \in \U$ and a natural isomorphism
\[
\Hom_\U ( M, J(n) ) \xrightarrow{\, \cong\, }
\Hom_{\FF_p}( M^n,\FF_p) .
\]
Since finite sums of vector spaces are also finite products, 
these functors are exact, so the module $J(n)$ is an injective
object in $\U$.

 \medskip
 
 \paragraph{\bf The Functor $\overline \tau$}
 
 In \cite[Thm 3.2.1]{MR1282727} it is shown that 
 for any module $H \in \U$, the functor $H \otimes_{\A_p} \?$
 has a left adjoint, denoted $(? : H)_\U$.    
Fix a module $H$ (to  stand in for $\twdl H^*(X; \FF_p)$)
 and write $\overline \tau$ for the
 functor $(\? :H)_\U$; this is intended to evoke the 
 standard notation $\overline T$ for the special case $H = \twdl H^*(
 B\ZZ/p; \FF_p)$.

\begin{lem}
\label{lem:main}
Let $H\in \U$ be a reduced unstable
module of finite type
and suppose that $H\otimes J(n)$ is injective in $\U$
for every $n\geq 0$.
Then
\begin{problist}
\item
$\overline \tau$ exact,
\item
$\overline \tau$ commutes with suspension,
\item
if $M$ is free and of finite type, then so is $\overline \tau(M)$, and 
\item
if $H^0 = 0$, then  $\overline \tau(M) = 0$
for any finite module $M\in \U$.
\end{problist}
\end{lem}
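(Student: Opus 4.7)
My strategy is to treat the four parts in order, each building on its predecessors; the central concrete computation is $\overline\tau F(n) \cong \bigoplus_{i=0}^n (H^i)^* \otimes F(n-i)$, which drives parts (c) and (d), while adjunction machinery handles (a) and (b). For part (a), since $\overline\tau$ is a left adjoint it is automatically right exact, so only left exactness needs proof. Given a short exact sequence $0 \to M' \to M \to M'' \to 0$, applying $\Hom_\U(-, H \otimes J(n))$ produces a short exact sequence because $H \otimes J(n)$ is injective by hypothesis, and by the adjunction this rewrites as
\[
0 \to \Hom_\U(\overline\tau M'', J(n)) \to \Hom_\U(\overline\tau M, J(n)) \to \Hom_\U(\overline\tau M', J(n)) \to 0.
\]
Since the $J(n)$ detect nonzero elements in every degree (they form a cogenerating family for $\U$), this forces $0 \to \overline\tau M' \to \overline\tau M \to \overline\tau M'' \to 0$ to be exact.

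For part (b), I would identify the right adjoints of the two composites: $\overline\tau \circ \s$ has right adjoint $\twdl\s \circ (H \otimes -)$, while $\s \circ \overline\tau$ has right adjoint $(H \otimes -) \circ \twdl\s$. These two functors are naturally isomorphic because $H$ is reduced: the identity $\twdl\s(H \otimes N) \cong H \otimes \twdl\s N$ should follow by unpacking the Cartan action, using $\twdl\s H = 0$. Uniqueness of adjoints then gives $\overline\tau \circ \s \cong \s \circ \overline\tau$. For part (c), the Yoneda lemma combined with the adjunction gives
\[
\Hom_\U(\overline\tau F(n), N) \;=\; (H \otimes N)^n \;=\; \bigoplus_{i=0}^n H^i \otimes N^{n-i},
\]
and since each $H^i$ is finite-dimensional the right-hand side is represented by $\bigoplus_{i=0}^n (H^i)^* \otimes F(n-i)$, a finite direct sum of free modules of finite type. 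Finite direct sums of such modules remain free of finite type, giving (c).

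For part (d), let $\overline F(n)$ denote $\FF_p$ concentrated in degree $n$. When $H^0 = 0$, the formula from (c) yields $\overline\tau\, \overline F(0) = \overline\tau F(0) = (H^0)^* \otimes F(0) = 0$. Since $\overline F(n) = \s\,\overline F(n-1)$, part (b) and induction on $n$ give $\overline\tau\, \overline F(n) = 0$ for every $n \geq 0$. For a general finite module $M$ with top nonzero degree $N$, the vector space $M^N$ is itself a sub-unstable-module of $M$ (positive-degree Steenrod operations on $M^N$ land in zero degrees above $N$), isomorphic to a direct sum of copies of $\overline F(N)$; the quotient $M/M^N$ has top degree strictly less than $N$, so a straightforward induction on $N$ using the exactness from part (a) forces $\overline\tau M = 0$.

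The main obstacle will be part (b). Unlike the others it is not a purely formal consequence of the adjunction and the injectivity/cogenerator data; the required natural isomorphism $\twdl\s(H \otimes N) \cong H \otimes \twdl\s N$ depends crucially on the reduced condition $\twdl\s H = 0$ and on the Hopf-algebra structure governing the Cartan action on $H \otimes N$. Once (b) is established, the remaining parts follow cleanly.
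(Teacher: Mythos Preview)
Your proposal is correct and follows essentially the same route as the paper. The paper simply cites Schwartz for (a) and (b) while you sketch the arguments (your adjunction-and-cogenerator argument for (a) is exactly the standard one, and your right-adjoint comparison for (b) is a legitimate route, correctly flagged as the nontrivial step); for (c) your Yoneda computation $\overline\tau F(n)\cong\bigoplus_{i+j=n}F(i)^{\oplus\dim H^j}$ is precisely the paper's, and for (d) both you and the paper filter a finite module by trivial subquotients and invoke exactness---you are just more explicit in using (b) to obtain $\overline\tau(\s^n\FF_p)=0$.
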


\begin{proof}
These results are covered in 
Sections 3.2 and 
 3.3 of \cite{MR1282727}.
 Specifically, parts (a) and (b) are proved as in
 \cite[Thm. 3.2.2 \& Prop. 3.3.4]{MR1282727}.  
 Parts (c) and (d) may be proved following
  \cite[Lem. 3.3.1 \& Prop. 3.3.6]{MR1282727}, but since there are
  some changes needed, we prove those parts here.
%
%

Write $d_k  = \dim_{\FF_p}( H^k)$;
then there are natural isomorphisms
\begin{eqnarray*}
\Hom_\U \left( 
\overline \tau( F(n)) ,   M \right
) 
&\cong&
\Hom_\U \left( 
  F(n) , H \otimes M \right
) 
\\
&\cong &
\Hom_\U \left( \mbox{$\bigoplus_{i+j = n} 
F(i)^{\oplus d_{j}}
,  M$} 
\right) , 
\end{eqnarray*}
proving (c) in the case of a free module on one generator.  
Since $\overline \tau$ is a left adjoint, it
commutes with colimits (and sums in particular), 
we derive the full statement of (c).

If $H^0 = 0$, then $d_0 = 0$ and   $\overline \tau(F(n))$
is a sum of free modules $F(k)$ with $k < n$. 
Since $F(0) = \FF_p$, we see  that  $\overline \tau(\FF_p) = 0$;
then  (a), together with the fact that $\overline \tau$ commutes
with colimits,  implies that $\overline \tau (M) = 0$ for all trivial modules 
 $M$.    Finally, any finite module $M$ has filtration all of whose
 subquotients are trivial, and (d) follows.
\end{proof}

 \subsection{Massey-Peterson Towers}

Cohomology of spaces has more structure than just 
that of an unstable $\A_p$-module.  It has a cup product
which makes $H^*(X; \FF_p)$ into an 
\term{unstable algebra} over $\A_p$.  The category 
of unstable algebras is denoted $\K$.

The forgetful functor $\K\to \U$ 
has a left adjoint $U: \U \to \K$.  A space $X$  is said to have
\term{very nice} cohomology if $H^*(X) \cong U(M)$ for some
unstable module $M$ of finite type.

Since $U(F(n)) \cong H^*(K(\ZZ/p, n))$, 
there is a contravariant functor $K$ which carries a free module
$F$ to a generalized Eilenberg-Mac Lane space (usually abbreviated
GEM) $K(F)$ such that 
$H^*(K(F)) \cong U(F)$.   If $F$ is free, then so is $\om F$, and 
  $K(\om F) \simeq \om K(F)$.

\begin{lem}
\label{lem:Maps2KF}
For any $X$, 
$[ X, K(F)] \cong \Hom_\U( F, \twdl H^*(X))$.
\end{lem}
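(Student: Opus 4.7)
The plan is to reduce to the monogenic case $F = F(n)$ by decomposing $F$ as a direct sum, and then apply the classical Eilenberg--Mac Lane representation theorem together with the universal property of $F(n)$ recalled just before the lemma statement.

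First I would use that $F$ is free to write $F \cong \bigoplus_\alpha F(n_\alpha)$ and then argue that $K(F) \simeq \prod_\alpha K(\ZZ/p, n_\alpha)$ as a (weak) product of ordinary Eilenberg--Mac Lane spaces: this follows because $K$ is contravariant and $U$ carries the direct sum to the tensor product $\bigotimes_\alpha U(F(n_\alpha))$, which matches the cohomology of the product. Both functors $[X, K(-)]$ and $\Hom_\U(-, \twdl H^*(X;\FF_p))$ are contravariant in $F$ and hence convert the direct sum into a product indexed by $\alpha$, so it is enough to establish the isomorphism when $F = F(n)$ for a single $n$.

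In the monogenic case, the Eilenberg--Mac Lane representation theorem gives $[X, K(\ZZ/p, n)] \cong \twdl H^n(X;\FF_p)$ via pullback of the fundamental class $\iota_n$, while the representability property from the excerpt gives $\Hom_\U(F(n), \twdl H^*(X;\FF_p)) \cong \twdl H^n(X;\FF_p)$ via $f \mapsto f([1])$. I would conclude by observing that the two bijections agree, since under the isomorphism $U(F(n)) \cong H^*(K(\ZZ/p, n);\FF_p)$ the generator $[1] \in F(n)^n$ corresponds precisely to the fundamental class $\iota_n$. Naturality in $X$ is automatic from both constructions.

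The main obstacle is the infinitely generated case: one needs to know that the (weak) product $\prod_\alpha K(\ZZ/p, n_\alpha)$ really does have cohomology $U(F)$ and really does corepresent $\prod_\alpha [X, K(\ZZ/p, n_\alpha)]$. Both are standard (the cohomology identification relies on each $n_\alpha$ being positive, which ensures degreewise finiteness of the tensor product), but I would spell them out briefly rather than sweep them under the rug.
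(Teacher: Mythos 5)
The paper does not actually prove this lemma; it is stated as a standard fact, immediately following the recollection that $U(F(n)) \cong H^*(K(\ZZ/p,n))$ and that $\Hom_\U(F(n), M) \cong M^n$. Your argument is exactly the standard one an author would leave implicit here: reduce via the direct-sum decomposition of a free module to the monogenic case and match the two representability bijections via the fundamental class. The two places you flag as needing care — that the (weak) product of Eilenberg--Mac Lane spaces representing $K(F)$ agrees with the actual product, and that the degreewise finiteness needed for $H^*(K(F)) \cong U(F)$ comes from the finite-type hypothesis — are indeed the correct points to spell out, and are the only places where the argument could otherwise be sloppy. Your proposal is correct and consistent with what the paper takes for granted.
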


It is shown in \cite{MR764593,MR546361,MR0226637} 
that if $H^*(Y) \cong U(M) $   and $P_*\to M\to 0$
is a free resolution in $\U$, 
then $Y$ has a \term{Massey-Peterson tower}
\[
\xymatrix{
\cdots \ar[r] &
Y_{s}\ar[r] \ar[d]& 
Y_{s-1} \ar[r] \ar[d] &
\cdots \ar[r] &
Y_1 \ar[r] \ar[d]&
Y_0\ar[d]
\\
& K(\om^s P_{s+1}) & K(\om^{s-1} P_s) && K(\om P_2) & K(P_1)
}
\]
in which 
\begin{enumerate}
\item
$Y_0 = K(P_0)$, 
\item 
each homotopy group $\pi_k(Y_s)$ is a finite $p$-group, 
\item
the limit of the tower is the completion $Y^\smsh_p$, 
\item
each sequence $Y_s \to Y_{s-1}\to K(\om^{s-1} P_s)$
is a fiber sequence, and
\item
the compositions   
$
\om K(\om^{s-1} P_s) 
\longrightarrow
Y_s
\longrightarrow
 K(\om^s P_{s+1})
$
 can be naturally
 identified with $K( \om^s d_{s+1} )$, where 
$d_{s+1}: P_{s+1}\to P_s$ is the differential
 in the given free resolution.
\end{enumerate}

\subsection{Phantom Maps}

A \term{phantom map} is a map $f:X\to Y$ from a CW complex
$X$ such that the restriction $f|_{X_n}$ of $f$ to the $n$-skeleton
is trivial for each $n$.    
We write $\Ph(X, Y) \sseq [X, Y]$ for the set of 
pointed homotopy classes
of phantom maps from $X$ to $Y$.
See \cite{MR1361910} for an excellent survey on phantom maps.

If $X$ is the homotopy colimit of a telescope diagram
$\cdots \to X_{(n)}\to X_{(n+1)}\to \cdots$, then 
there is a short exact sequence
of pointed sets 
\[
* \to \limone [\s X_{(n)}, Y] 
\longrightarrow
 [X, Y] 
 \longrightarrow
  \lim [ X_{(n)}, Y]\to *, 
\]
and dually, if $Y$ is the homotopy limit of a tower
$\cdots \gets Y_{(n)}\gets Y_{(n+1)}\gets \cdots$, then there
is a short exact sequence
\[
* \to \limone [ X, \om Y_{(n)}] 
\longrightarrow
 [X, Y] 
\longrightarrow
 \lim [ X, Y_{(n)}]\to * .
\]
In the particular case of the expression of a CW complex $X$ as
the homotopy colimit of its skeleta or of a space $Y$ as the
homotopy limit of its Postnikov system, the kernels are the
phantom sets.

%
%
%


We will be interested in showing that all phantom maps are trivial.
One useful criterion is that if $G$ is a tower of compact Hausdorff
topological spaces, then $\limone G = *$ 
(see \cite[Prop. 4.3]{MR1361910}).  This is used to prove
the following lemma.

\begin{lem}
\label{lem:CompactLimOneVanish}
Let $\cdots \gets Y_s \gets Y_{s+1} \gets \cdots$ be a tower 
of spaces such that
each homotopy group $\pi_k(Y_s)$ is finite.
If $Z$ is of finite type, then 
$
\limone [ Z, \om Y_s ] = *
$.
\end{lem}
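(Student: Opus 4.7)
The plan is to reduce the statement to the criterion quoted just before the lemma: a $\limone$ of a tower of compact Hausdorff topological spaces vanishes. So I want to equip each set $[Z,\om Y_s]$ with a natural compact Hausdorff topology such that the bonding maps $[Z,\om Y_{s+1}] \to [Z,\om Y_s]$ in the tower over $s$ are continuous.

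First I would record an auxiliary fact: if $W$ is a finite CW complex and $V$ is any space all of whose homotopy groups are finite, then $[W,V]$ is a finite set. This is a standard Postnikov argument. Let $N = \dim W$ and use the Postnikov truncation $V \to V[N]$, which is $(N+1)$-connected so $[W,V] \cong [W,V[N]]$. Then $V[N]$ is built by principal fibrations from the Eilenberg--Mac Lane spaces $K(\pi_k V, k)$ for $k \le N$; each $[W, K(\pi_k V, k)] \cong H^k(W; \pi_k V)$ is finite because $W$ has finitely many cells and each $\pi_k V$ is finite; then climb the tower, using that an extension of a finite set by a finite set is finite.

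Second, I would use the finite type hypothesis on $Z$ to write $Z \simeq \hocolim_n Z^{(n)}$ where each skeleton $Z^{(n)}$ is a finite CW complex. The telescope Milnor short exact sequence from the excerpt gives
\[
\ast \to \limone_n\, [\s Z^{(n)}, \om Y_s] \longrightarrow [Z, \om Y_s] \longrightarrow \lim_n\, [Z^{(n)}, \om Y_s] \to \ast .
\]
Since $\pi_k(\om^j Y_s) = \pi_{k+j}(Y_s)$ is finite for every $j$, the preceding auxiliary fact (applied with $W = Z^{(n)}$ or $W = \s Z^{(n)}$ and $V = \om Y_s$) makes each term $[Z^{(n)}, \om Y_s]$ and $[\s Z^{(n)}, \om Y_s]$ into a finite set. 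Equipping these finite sets with the discrete topology makes them compact Hausdorff, so the $\limone$ on the left already vanishes by the cited criterion applied in the direction of $n$. Hence $[Z,\om Y_s] \cong \lim_n [Z^{(n)}, \om Y_s]$ is an inverse limit of finite discrete spaces, and thereby inherits a compact Hausdorff topology.

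Third, the bonding maps $[Z,\om Y_{s+1}] \to [Z,\om Y_s]$ are assembled levelwise from maps of finite (hence discrete) sets $[Z^{(n)}, \om Y_{s+1}] \to [Z^{(n)}, \om Y_s]$, so they are automatically continuous in the topology just constructed. Thus $\{[Z,\om Y_s]\}_s$ is a tower of compact Hausdorff topological spaces with continuous bonding maps, and one last application of the criterion gives $\limone_s [Z,\om Y_s] = \ast$, as required. The only conceptually non-obvious point is the topology on $[Z,\om Y_s]$; once this is exhibited by the double-indexed argument above, everything else is formal.
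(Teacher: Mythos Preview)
Your argument is correct and is essentially the same as the paper's: both exhibit $[Z,\om Y_s]$ as a profinite set by first killing the $\limone$ over skeleta (using that $[\s Z^{(n)},\om Y_s]\cong[Z^{(n)},\om^2 Y_s]$ is finite), and then apply the compact--Hausdorff criterion in the $s$ direction. Your Postnikov justification of finiteness is a welcome bit of extra detail; the only thing the paper makes slightly more explicit is that all the sets involved are in fact groups (so that $\limone$ is defined), which is automatic here since every target is a loop space.
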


\begin{proof}
The homotopy sets $[Z_n, \om^j Y_s]$ are  finite, and 
we give them the discrete topology, resulting in  towers of 
compact groups and continuous homomorphisms.
Fixing $s$ and lettting $n$ vary, we find that
 $\limone [ Z_n , \om^2 Y_s] = *$, 
 and hence the exact sequence
 \[
0 \to \limone_n [ Z_n, \om^2 Y_{s}] 
\longrightarrow
 [Z, \om Y_s] 
\longrightarrow
{ \lim}_n [ Z_n , \om Y_s]\to 1 
\]
(of groups)
reduces to an isomorphism 
$[Z , \om Y_s] \cong 
{ \lim}_n [ Z_n , \om Y_s]$.
Since $[Z , \om Y_s]$
 is an inverse limit of finite discrete spaces,  
it is compact and Hausdorff; and since the structure maps 
$Y_s \to Y_{s-1}$ induce maps of the towers that define the topology, 
    the induced maps
$ [Z , \om Y_s]\to [Z , \om Y_{s-1}]$ are continuous.
Thus
 $\limone [Z , \om Y_s] = *$.
\end{proof}
 

%
%
%
 
%

The Mittag-Leffler condition is another
useful criterion for the vanishing of $\limone$.
 A
tower of groups $\cdots \gets G_n \gets G_{n+1}\gets \cdots$
is \term{Mittag-Leffler} if there is a function $\kappa : \NN\to \NN$
such that for each $n$
 $
 \im( G_{n+k} \to G_n) = \im( G_{n+\kappa(n)} \to G_n)\sseq G_n
 $
 for  every $k \geq \kappa(n)$.
 
\begin{prop}
Let $\cdots \gets G_n \gets G_{n+1}\gets \cdots$ be a tower of groups.\begin{problist}
\item 
If the tower  is Mittag-Leffler, then $\limone  G_n = *$.
\item
If each $G_n$ is a countable group, then  the converse
holds:  if $\limone G_n = *$, then the tower is Mittag-Leffler 
\cite[Thm. 4.4]{MR1361910}.
\end{problist}
\end{prop}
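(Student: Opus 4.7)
My plan is to prove the two parts by contrasting methods. For (a), the argument is constructive: the stable images form a subtower of surjective maps, and a direct induction handles surjective towers. For (b), which is the harder direction, the argument is a diagonal-type construction producing an explicit nontrivial cocycle when Mittag-Leffler fails, and the countability assumption is crucial for the diagonalization to succeed; here I would follow the argument in the cited reference.

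For part (a), let $H_n \leq G_n$ denote the image of the composite $G_{n+\kappa(n)} \to G_n$; by the Mittag-Leffler hypothesis this equals the image of $G_{n+k}\to G_n$ for every $k\geq \kappa(n)$, and so equals the eventual image $\bigcap_k \im(G_{n+k}\to G_n)$. A short index-bumping calculation shows that the restricted structure map $H_{n+1}\to H_n$ is surjective, so $(H_n)$ is a subtower of surjective maps. For such a tower, $\limone H_n = *$ is standard: given $(y_n)\in \prod H_n$, set $a_0 = 1\in H_0$ and inductively choose $a_{n+1}\in H_{n+1}$ with $\phi_{n+1}(a_{n+1}) = y_n^{-1} a_n$, which exists by surjectivity. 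It remains to show every class in $\limone G_n$ is represented by a sequence in $\prod H_n$; after passing to a cofinal subtower (which preserves $\limone$) the image of each structure map coincides with $H_n$, and one can translate any $(x_n)\in \prod G_n$ into $\prod H_n$ by acting on the left by a sequence $(c_n)$ with $c_n\in H_n x_n^{-1}$ (nonempty because $H_n$ is a subgroup).

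For part (b), I argue the contrapositive. Assume the tower is not Mittag-Leffler, so there is an index $n_0$ at which the chain of images into $G_{n_0}$ strictly decreases infinitely often. Using that each $G_n$ is countable, enumerate all finite initial segments of candidate coboundaries $(a_0, a_1, \ldots, a_N)$; the total collection of such segments is still countable. Construct $(x_n)\in \prod G_n$ in stages, at each stage choosing the new entry so as to rule out the next enumerated candidate; the strictly decreasing image chain supplies enough room to keep this achievable while remaining consistent with earlier commitments. The resulting $(x_n)$ represents a nontrivial class in $\limone G_n$, contradicting the assumption. The details are in \cite[Thm.~4.4]{MR1361910}.

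The main obstacle is the diagonal construction in (b): it requires careful bookkeeping to ensure that the obstruction planted at stage $n$ really persists against all later extensions of the cochain, while still leaving room for the construction to continue. Part (a) is essentially routine once the subtower $(H_n)$ is identified; the only minor subtlety is the passage from $\limone H_n = *$ to $\limone G_n = *$, which amounts to the standard fact that $\limone$ is invariant under passage to cofinal subtowers and that any sequence $(x_n)$ can be conjugated into $\prod H_n$ by a cochain supported in the (single) coset $H_n x_n^{-1}$.
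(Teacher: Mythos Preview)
The paper does not actually supply a proof of this proposition: it is stated as a known result, with part (b) attributed directly to \cite[Thm.~4.4]{MR1361910}, and the text moves on immediately. So there is no ``paper's own proof'' to compare against.

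Your sketch is essentially the standard argument and is correct in outline. For (a), the identification of the stable-image subtower $(H_n)$ with surjective bonding maps, the elementary inductive trivialization of $\limone H_n$, and the translation of an arbitrary $(x_n)$ into $\prod H_n$ via the coset trick are all fine; the only point worth tightening is the explanation that after passing to a cofinal subtower one has $\phi_{n+1}(G_{n+1}) = H_n$, so that $\phi_{n+1}(c_{n+1}) \in H_n$ automatically and hence $c_n x_n \phi_{n+1}(c_{n+1})^{-1} \in H_n$ once $c_n x_n \in H_n$ (the phrase ``nonempty because $H_n$ is a subgroup'' is a slightly odd justification for a coset being nonempty). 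For (b), deferring the diagonalization details to McGibbon's survey is exactly what the paper does, so you are in good company; your informal description of the construction is accurate.
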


%
%

Importantly, the Mittag-Leffler condition does not refer to the
algebraic structure of the groups $G_n$.
This observation plays a key role in 
 the following   result (cf. \cite[\S 3]{MR1357793}).

\begin{prop}
\label{prop:SameLoop}
Let  $X$ be a CW complex of finite type, and let
$Y_1$ and $Y_2$ be countable CW complexes 
with $\om Y_1 \simeq \om Y_2$.
Then 
$
\Ph( X, Y_1)  = * 
$
if and only if
$
\Ph(X, Y_2) = *
$.
\end{prop}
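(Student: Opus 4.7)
The plan is to identify $\Ph(X, Y_i)$ as a $\limone$ of a tower, note that the hypothesis $\om Y_1 \simeq \om Y_2$ yields an isomorphism of these towers as towers of pointed \emph{sets} (though perhaps not of groups), and then use the preceding Mittag-Leffler proposition to convert vanishing of $\limone$ (a group-theoretic statement) into the Mittag-Leffler condition (a set-theoretic property preserved by the isomorphism).

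First, writing $X = \hocolim_n X_n$ as the homotopy colimit of its skeleta and applying the $\limone$-exact sequence from the excerpt, we identify
\[
\Ph(X, Y_i) \;=\; \limone_n \, [\s X_n, Y_i] \;\cong\; \limone_n \, [X_n, \om Y_i],
\]
where the second isomorphism comes from the suspension-loop adjunction.

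Next, a homotopy equivalence $\om Y_1 \simeq \om Y_2$ --- although not necessarily an H-equivalence --- induces, for each $n$, a bijection $[X_n, \om Y_1] \cong [X_n, \om Y_2]$ of pointed sets, compatibly with the skeletal restriction maps. Hence the two towers agree as towers of pointed sets. Moreover, since $X$ has finite type, each skeleton $X_n$ is a finite complex, and $\om Y_i$ has the homotopy type of a countable CW complex; a standard cellular counting argument then shows that each set $[X_n, \om Y_i]$ is countable. Part (b) of the preceding proposition therefore gives
\[
\Ph(X, Y_i) = * \;\iff\; \text{the tower } \{[X_n, \om Y_i]\}_n \text{ is Mittag-Leffler.}
\]

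The Mittag-Leffler condition refers only to the stabilization of the images $\im(G_{n+k}\to G_n)$ inside $G_n$, a property that makes sense and is preserved under any isomorphism of towers of pointed sets. Thus the towers for $\om Y_1$ and $\om Y_2$ are simultaneously Mittag-Leffler, giving the claimed equivalence. The key obstacle the argument navigates is that $\om Y_1 \simeq \om Y_2$ is only a bare homotopy equivalence, so one cannot simply transfer $\limone$ across it directly; the bridge is provided by the Mittag-Leffler criterion, which is precisely why both the countability hypothesis on $Y_i$ and the converse half of the Mittag-Leffler proposition are essential.
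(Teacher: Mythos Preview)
Your proof is correct and follows essentially the same route as the paper's: identify the phantom set with $\limone$ of the tower $\{[\s X_n, Y_i]\}\cong\{[X_n,\om Y_i]\}$, observe that countability allows the converse Mittag--Leffler criterion, and then use that the Mittag--Leffler condition is purely set-theoretic and hence transfers along the levelwise bijections induced by $\om Y_1\simeq \om Y_2$. The paper's presentation is terser but the argument is identical.
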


\begin{proof}
The homotopy equivalence
 $\om Y_1\simeq \om Y_2$ gives 
 levelwise bijections
$
\{ [\s X_n, Y_1]\}
\cong
\{ [ X_n, \om Y_1]\}
\cong 
\{ [ X_n,\om  Y_2]\}
\cong
\{ [\s X_n, Y_2]\}
$
of towers of sets. 
Since $X$ is of finite type and $Y_1, Y_2$ are countable
CW complexes, these towers are towers of countable 
groups.
Now the  triviality of the first phantom set implies that 
  the first tower is Mittag-Leffler; but then all four towers
 must be   Mittag-Leffler, 
and the result follows.
\end{proof}

%
%
%
%
%

\section{(1) implies (2)}

Write $H = \twdl H^*(X;\FF_p)$; thus
   $H\in \U$ is a reduced module of finite type 
and   $H\otimes J(n)$ is injective for all $n$.
If $P_*\to M\to 0$ is a free resolution of $M$ in $\U$, 
then  Lemma \ref{lem:main} implies that 
$\overline \tau(P_*) \to 0 \to 0$ is   a free resolution of $0$, so  
\begin{eqnarray*}
\Ext_\U^s \left( M, \s^{s+t} H \right)
&=&
\Ext_\U^s \left( M, H \otimes \s^{s+t}\FF_p)\right)
\\
&=&
H^s\left(  \Hom\left( P_*, H \otimes \s^{s+t}\FF_p\right)\right)
\\
&=&
H^s\left( \Hom \left( \overline \tau( P_*), \s^{s+t} \FF_p \right)\right)
\\
&=&
\Ext_\U^s \left( 0, \s^{s+t}\FF_p\right)
\\
&=& 0.
\end{eqnarray*}

\section{(2) implies (3)}


\begin{thm}
\label{thm:FakeHarper}
Suppose  $H^*(Y) = U(M)$ for some finite $M\in \U$,
and  
$Z$ is a CW complex
 of finite type with $\twdl H^*(Z; \ZZ[{1\over p}]) = 0$.
If    
$
\Ext_\U^s (M , \s^{s} \twdl H^*(Z)) =0 
$
for all $s\geq 0$, then $[   Z, Y ] = *$.
\end{thm}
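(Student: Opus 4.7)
The strategy is to analyze $[Z,Y]$ through the Massey-Peterson tower $\{Y_s\}$ associated to a free resolution $\cdots \to P_1 \to P_0 \to M \to 0$ in $\U$. Since the tower has limit $Y^\smsh_p$, my plan is first to show $[Z, Y^\smsh_p]=*$ and then to deduce $[Z, Y]=*$ using the hypothesis on $\twdl H^*(Z;\ZZ[{1\over p}])$.

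To compute $[Z, Y^\smsh_p]$, I would apply the lim/$\limone$ sequence
\[
* \to \limone [Z, \om Y_s] \longrightarrow [Z, Y^\smsh_p] \longrightarrow \lim [Z, Y_s] \to *;
\]
the $\limone$ term vanishes by \lemref{lem:CompactLimOneVanish}, since each $\pi_k(Y_s)$ is finite. The remaining task is to show that $\lim_s [Z, Y_s]$ is trivial, i.e.\ that every compatible system $\{f_s:Z \to Y_s\}$ is null.

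The heart of the argument is an induction on $s$. For $s=0$, the map $f_0 : Z \to K(P_0)$ corresponds via \lemref{lem:Maps2KF} to some $\phi_0 : P_0 \to \twdl H^*(Z)$; since $f_0$ extends up the tower, composition with the $k$-invariant $Y_0 \to K(P_1)$ is nullhomotopic, so $\phi_0 \circ d_1 = 0$ and $\phi_0$ represents a class in $\Ext^0_\U(M, \twdl H^*(Z))=0$, forcing $f_0 \sim *$. For the inductive step, supposing $f_{s-1}\sim *$, I would lift $f_s$ through the fiber inclusion $K(\om^s P_s) \to Y_s$ to get $\tilde f_s$, which under the $\om\dashv \s$ adjunction corresponds to some $\phi_s : P_s \to \s^s \twdl H^*(Z)$. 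Compatibility of the system with $f_{s+1}$ combined with property~(5) of the tower gives $\phi_s \circ d_{s+1}=0$, and the indeterminacy in the choice of lift (an action of $[Z, \om Y_{s-1}]$, analyzed by looping property~(5)) matches exactly the coboundaries $\psi \circ d_s$. Hence $\phi_s$ defines a class in $\Ext^s_\U(M, \s^s \twdl H^*(Z))=0$, so I can choose the lift with $\phi_s = 0$, yielding $f_s \sim *$.

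Finally, to pass from $[Z, Y^\smsh_p] = *$ to $[Z, Y]=*$, I would invoke the arithmetic fracture square: the hypothesis $\twdl H^*(Z;\ZZ[{1\over p}])=0$ forces $\twdl H^*(Z;\QQ)=0$ and $\twdl H^*(Z;\FF_q)=0$ for every prime $q \neq p$, so maps from $Z$ into the rationalization and the other $q$-completions of $Y$ are trivial; any map $Z \to Y$ whose $p$-completion is null is therefore itself null. The main obstacle is the inductive step: verifying carefully, via property~(5), that both the cocycle condition on $\phi_s$ and the coboundary ambiguity from lift choices are exactly as required to produce a well-defined class in $\Ext^s_\U(M, \s^s \twdl H^*(Z))$. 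The rest is formal, but this Puppe-sequence bookkeeping is where the work lies.
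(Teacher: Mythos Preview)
Your proposal is essentially the paper's proof. The paper likewise computes $[Z,Y^\smsh_p]$ via the Massey-Peterson tower, uses \lemref{lem:CompactLimOneVanish} to kill the $\limone$ term, and runs the same induction: $f_{s-1}\simeq *$ lets one write $[f_s]=\alpha(g_s)$ with $g_s\in\Hom_\U(P_s,\s^s H)$; liftability to $Y_{s+1}$ gives $d_{s+1}^*(g_s)=0$; $\Ext^s=0$ gives $g_s=d_s^*(g_{s-1})$; and $\alpha\circ d_s^*=*$ (via property~(5) and the fiber sequence) gives $f_s\simeq *$.

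Two small remarks. First, your claim that the lift indeterminacy ``matches exactly'' the coboundaries is stronger than what is needed or proved: the indeterminacy is the image of $[Z,\om Y_{s-1}]$, which \emph{contains} the coboundaries (this containment is all you use). Second, for the passage from $[Z,Y^\smsh_p]=*$ to $[Z,Y]=*$, the paper simply invokes \cite[Thm.~4.2]{MR764593} rather than arguing via the fracture square; your sketch is the right idea but would need care (nilpotence of $Y$, control of maps into $Y^\smsh_q$ and $Y_\QQ$ from the cohomological hypothesis), which is exactly what that cited result packages.
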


Condition (2) allows us to apply Theorem \ref{thm:FakeHarper}
to  $Z = \s^t X$ and $Y = S^{2m+1}$
to deduce condition (3):
$
\pi_t( \map_*(X, S^{2m+1})) \cong
[\s^t X, S^{2m+1}]  = *.
$

\begin{proof}[Proof of Theorem \ref{thm:FakeHarper}]
According to \cite[Thm. 4.2]{MR764593}, the 
natural map $Y \to Y^\smsh_p$ induces 
a bijection $[ Z, Y]  \cong
[Z, Y^\smsh_p]$, so it suffices
to show   $[Z , Y^\smsh_p ] = *$.
Since $H^*(Y) = U(M)$, $Y$ has a Massey-Peterson 
tower, whose homotopy limit is $Y^\smsh_p$.
  Let $f_s$ be the composite $Z\to Y \to Y_s$; we will 
show by induction that $f_s \simeq *$ for all $s$.

Since $Y_0$ is a GEM, 
$f_0$ is determined by its effect on cohomology;
and since
$
\Hom_\U ( M, \twdl H^*(Z)) = 
\Ext_\U^0 (M , \s^{0} \twdl H^*(Z)) = 0
$,
  $f_0$ is trivial on cohomology, and hence trivial.
 Inductively,
suppose $f_{s-1}$
is trivial.  We have the following situation
\[
\xymatrix{
K ( \om ^s P_{s-1}) \ar@(ru,lu)[rr]^-{K(\om^s d_{s})}
\ar[r]
&
\om Y_{s-1}\ar[r] 
\ar@(rd,ld)[rr]_-{*}
&
K( \om^s P_s) \ar[r] \ar@(ru,lu)[rr]^-{K(\om^s d_{s+1})}
&Y_s\ar[d]\ar[r] 
& K(\om^s P_{s+1} )
\\
&&&
Y_{s-1} .
}
\]
Now apply  $[ Z, \? ]$ to this diagram and observe that
Lemma \ref{lem:Maps2KF}, together with 
the  isomorphism 
$\Hom_\U( \om^s P, H) \cong \Hom_\U( P, \s^s H)$
(with $H =\twdl  H^*( Z)$),
gives
\[
\xymatrix{
&& [Z, Y_{s+1}]\ar[d]
\\
\Hom_\U( 
P_{s-1}, \s^s H) 
\ar[r]^-{d_{s}^*}
\ar@(rd,ld)[rr]_{*}
&
\Hom_\U (
P_s, \s^s H ) \ar[r]^-\alpha \ar@(ru,lu)[rr]^(.7){ d_{s+1}^*} 
&[Z,Y_s]\ar[d]\ar[r]^-\beta
& \Hom_\U( 
P_{s+1} , \s^s H)
\\
&&
[Z, Y_{s-1}] .
}
\]
Exactness at $[Z, Y_s]$ implies that the homotopy class
$[f_s]$ is equal to $\alpha( g_s)$ for some 
$g_s\in \Hom_\U (P_s, \s^s H )$.
Since $[f_s]$ is in the image of the vertical map from 
$[Z,Y_{s+1}]$, it is in the kernel of $\beta$, so 
$d_{s+1}^*(g_s) = \beta( [f_s]) = 0$; in other words, 
$g_s$ is a cycle representing an element   
$[g_s] \in \Ext_\U^s(M, \s^s H)$.   Since
$ \Ext_\U^s(M, \s^s H) = 0$, we 
conclude that $g_s = d_s^*(g_{s-1})$ and so 
$
[f_s] = \alpha ( d_s^*(g_{s-1}) ) =  [ *]
$.

Since every map $f: Z\to Y$
is trivial on composition to $Y_s$ for each $s$, the 
  exact sequence
$
* \to \limone   [ Z, \om Y_s]
\longrightarrow  [  Z, Y^\smsh_p ] 
\longrightarrow  \lim [  Z, Y_s] \to *
$
reduces to an isomorphism 
$
[  Z, Y^\smsh_p ]
\cong
\limone   [ Z, \om Y_s]
$, 
and Lemma \ref{lem:CompactLimOneVanish}
finishes the proof.
%
\end{proof}

\section{(3) implies (4)}

%

The statement that (3) implies (4)  is very similar to 
Corollary 11 from \cite{MR2029919}, 
which is proved using only classical results:
 the splitting of a product after suspension;
the James construction \cite{MR0073181}; 
the Hilton-Milnor theorem in the form
proved by Gray \cite{MR0281202};
a result of Ganea on the homotopy type of the suspension
of a homotopy fiber \cite[Prop. 3.3]{MR0179791}
(see also Gray's paper \cite{MR0334198}); and a
Blakers-Massey-type theorem for
$n$-ads due to  Barratt and Whitehead \cite{MR0085509}
and  Toda \cite{MR0075589}.

The   difference is that   here   we restrict 
our attention
  to odd spheres.    To get this stronger statement, 
write $\R = \{ K \st \map_*(X, K) \sim *\}$, and
suppose that $\R$ contains all odd-dimensional spheres 
of sufficiently high dimension;  to make the argument of
\cite{MR2029919} work, it  suffices to   show that $\R$
contains all finite-type wedges of odd-dimensional spheres of 
sufficiently high dimension.    

If $W = V_1 \wdg V_2$ where both $V_1$ and $V_2$ are
wedges of odd-dimensional spheres, then the homotopy
fiber of the quotient map 
$q: W\to V_2$ is 
\[
V_1 \rtimes \om V_2 \simeq V_1 \smsh  U, 
\]
where $U$ is a wedge of even-dimensional spheres.
Now an examination of the proof 
of \cite[Prop. 7]{MR2029919} reveals that if the initial wedge 
$V_0$ is a wedge of odd-dimensional spheres, then so are all of the 
later wedges $V_n$, and so the argument carries through 
unchanged.

\section{(4) implies (5)}

%
%

Write $W = \bigvee_{i=1}^\infty S^{n_i}$, and let
Let $f: \s^t X \to W$.  Since $X$ has finite type, 
 $(\s^t X)_k$ is compact, and so 
$f((\s^t X)_k)$ is contained in a finite subwedge $V \sseq W$.
The
homotopy commutative diagram 
\[
\xymatrix{
&& (\s^t X)_k\ar[d]\ar@/^/[rrdd]^{f|_{(\s^t X)_k}}
\\
&& \s^t X \ar[d]^{*} \ar@/_/[lld]_f
\\
W \ar[rr]^-q && V \ar[rr]^-{i} && W, 
}
\]
in which $q$ is the collapse map to $V$ and $i$ is the inclusion,
shows that $f|_{(\s^t X)_k}\simeq *$, and hence that 
$f$ is a phantom map.  

The   conclusion $f\simeq *$   follows from
taking $Z = \s^t X$ in 
  Proposition \ref{prop:BLA}.
  
\begin{prop}
\label{prop:BLA}
If $Z$ is rationally trivial and of finite type, 
then 
\[
\Ph \left(\mbox{$Z, \bigvee_{i=1}^\infty S^{n_i} $} \right) = *  .
\]
\end{prop}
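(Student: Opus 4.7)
The plan is to prove $\Ph(Z, W) = *$ by verifying the Mittag-Leffler condition on the tower $\{[Z_n, \Omega W]\}_n$; this suffices because $\Ph(Z,W) = \limone_n [Z_n, \Omega W]$ and the converse Mittag-Leffler criterion (part (b) of the proposition preceding Proposition \ref{prop:SameLoop}) applies to this tower of countable groups---countable because $Z$ has finite type and $W$ is a countable CW complex.

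To verify Mittag-Leffler I would invoke the Hilton--Milnor theorem, which decomposes $\Omega W$ as a weak product of loop spaces of spheres $\prod'_\alpha \Omega S^{m_\alpha}$ indexed by basic products of the generators of $V=\bigvee S^{n_i-1}$. Since each $Z_n$ is compact, any map into this weak product factors through a finite sub-product, yielding an isomorphism
\[
[Z_n, \Omega W] \;\cong\; \bigoplus_\alpha [Z_n, \Omega S^{m_\alpha}].
\]
This ideally reduces the task to showing $\Ph(Z, S^m) = *$ for each single sphere $S^m$ with $m > 1$, a statement that should follow from the rational triviality of $Z$ via standard phantom map results (\cite{MR1361910}): phantom maps from a finite-type source into a finite-type nilpotent target are detected by rational obstructions, which vanish when the source is rationally trivial.

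The main obstacle is that $\limone$ does not commute with direct sums in general, so summand-wise vanishing does not immediately yield vanishing of $\limone$ of the direct-sum tower. I would address this via the short exact sequence of towers
\[
\bigoplus_\alpha [Z_n, \Omega S^{m_\alpha}] \hookrightarrow \prod_\alpha [Z_n, \Omega S^{m_\alpha}] \twoheadrightarrow \prod_\alpha [Z_n, \Omega S^{m_\alpha}]/\bigoplus_\alpha [Z_n, \Omega S^{m_\alpha}]
\]
and its associated long exact sequence in $\lim$ and $\limone$: since $\limone$ commutes with products, the individual vanishings give $\limone \prod_\alpha = \prod_\alpha \limone = 0$, and one must then analyze the connecting map from $\lim$ of the quotient tower to conclude $\limone \bigoplus = 0$. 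As a fallback, Proposition \ref{prop:SameLoop} might permit a direct replacement of $W$ by a countable CW complex $Y$ with $\Omega Y \simeq \Omega W$ whose loop-space decomposition is strong rather than weak, sidestepping the direct-sum/product subtlety altogether.
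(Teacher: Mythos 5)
Your setup is correct and matches the paper's first half: Hilton--Milnor to split the loop space of the wedge, compactness of skeleta to identify the resulting homotopy sets with a weak product, countability so that $\limone = *$ and Mittag--Leffler are equivalent, and Proposition~\ref{prop:SameLoop} to allow replacing the target. But the proposal stops exactly where the real work begins: showing that the weak-product (direct-sum) tower is Mittag--Leffler. Your short-exact-sequence idea is not carried through --- ``one must then analyze the connecting map'' \emph{is} the whole difficulty, and it is left undone. Your fallback, replacing $W$ by a space whose loop space splits as a ``strong'' rather than ``weak'' product, is not available: $\Omega W$ genuinely splits only as a weak product, and the honest categorical product of infinitely many positive-dimensional spheres is not a countable CW complex (e.g.\ it has uncountable homotopy groups), so Proposition~\ref{prop:SameLoop} would not apply to it.

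The missing idea is to verify the Mittag--Leffler condition on the weak-product tower \emph{directly}, by producing a uniform stabilization bound across all factors. This is what the paper does. Replace $W$ by the weak product of spheres $P = \prod_\alpha S^{m_\alpha}$ with $\Omega P \simeq \Omega W$; $P$ is a countable CW complex, so Proposition~\ref{prop:SameLoop} applies, and by compactness $[\Sigma Z_k, P]$ is the weak product $\prod_\alpha [\Sigma Z_k, S^{m_\alpha}]$. For each single sphere the tower $\{[\Sigma Z_k, S^m]\}_k$ has vanishing $\limone$ (rational triviality of $Z$), hence is Mittag--Leffler by the countability converse; let $\lambda(n,m)$ be the index at which $\im\bigl([\Sigma Z_{n+k}, S^m] \to [\Sigma Z_n, S^m]\bigr)$ stabilizes. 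The key dimensional observation is that $\lambda(n,m) = 0$ once $m > n+1$, because the homotopy sets in question are already trivial. Thus for fixed $n$ only finitely many $\lambda(n,m)$ are nonzero, and $\kappa(n) := \max_m \lambda(n,m) < \infty$. This single $\kappa(n)$ works simultaneously in every factor, so
\[
\im\Bigl( \prod_\alpha [\Sigma Z_{n+k}, S^{m_\alpha}] \longrightarrow \prod_\alpha [\Sigma Z_n, S^{m_\alpha}] \Bigr)
\]
is independent of $k$ for $k \ge \kappa(n)$, giving Mittag--Leffler for the whole weak-product tower with no need to commute $\limone$ past a direct sum. That uniform-bound step is the gap in your proposal.
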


\begin{proof}
 The Hilton-Milnor
 theorem implies that there is a \textit{weak} product 
 of spheres 
 $P = \prod_\alpha S^{m_\alpha}$
 such that 
 $
 \om \left(  \bigvee_1^\infty S^{n_i}  \right) \simeq \om P
 $
 (that is, $P$ is the (homotopy) colimit of the diagram of finite subproducts
 of the categorical product).
 By Proposition \ref{prop:SameLoop}, 
  it suffices to show that $\Ph(Z, P) = *$.  
 
Since the skeleta of $Z$ are compact, 
every map $\s Z_k \to P$ factors 
 through a 
finite subproduct of $P$, so 
  $[\s Z_k , P]$ is a weak product 
 $\prod_\alpha [\s Z_k, S^{m_\alpha}]$.
  
Since $Z$ is rationally trivial, 
we have $\limone  [ \s Z_k, S^{m}] = *$ for each $m$, 
and since these are towers of countable groups, they are all
Mittag-Leffler.  
Write $\lambda (n,m)$ for the first $k$ for which the images
 \[
\im\left( 
 [(\s Z)_{n+k}, S^m]
\to   [(\s Z)_{n}, S^m]
   \right)
 \]
stabilize.  Since  $\lambda( n,m) = 0$
 for $m > n+1$,    the set $\{ \lambda (n,m)\st m\geq 0\}$ 
 is finite, and we define $\kappa(n)$ to be its maximum.
%
%
Now it is clear that the images 
\[
\im\left( 
\mbox{$
 \prod_\alpha [\s Z_{n+k} , S^{m_\alpha}]
 \to
  \prod_\alpha [\s Z_n, S^{m_\alpha}]
  $}
  \right)
  \]
  are independent of $k$ for $k \geq \kappa(n)$.
  Thus the tower $\{   \prod_\alpha [\s Z_k, S^{n_\alpha}]\}$
  is Mittag-Leffler,  $\Ph( Z, P) = *$,  and  the proof is complete.
%
%
%
%
%
%
%
%
\end{proof}

%
%
%
%
%
%
%
%

\section{(5) implies (6)}

Let $f: \s^t X\to K$, where $K$ is a 
simply-connected
finite-dimensional CW complex, which we may 
assume has trivial $1$-skeleton.
  Since $X$ is of finite type, the image $f( \s^t X_k )$
of each skeleton    is contained in a finite
subcomplex of $K$, and hence $f$ 
factors through the inclusion of   a countable 
subcomplex of $K$, which is necessarily  
simply-connected and finite-dimensional.  Thus to show 
$f \simeq *$, it suffices
to prove that $\map_*( X, L) \sim *$ for all 
finite-dimensional countable
 simply-connected CW complexes $L$.

 Write   $\A = \{  \bigvee_{i=1}^\infty  S^{n_i} \}$ (with each $n_i > 1$). 
Since $\A$ is closed under suspension and smash product, 
and $\A\sseq \R = \{ K \st \map_*(X,K)\sim *\}$, 
  Theorem 8 of 
\cite{MR2029919} implies that every simply-connected
space which has finite cone length with respect 
to 
$\A = \A^\wdg$ is also in $\R$.
But this includes all simply-connected
  countable finite-dimensional CW complexes.

\section{(6) implies (7)}

We continue to write $\R = \{ K \st \map_*(X,K)\sim *\}$. 
We know that $\R$ contains all simply-connected
finite-dimensional wedges of spheres, a collection 
that is closed under suspension and smash product.
Since generic wedges $\bigvee S^{n_\alpha}$ are 
finite-type wedges of such wedges, 
Theorem 8 of \cite{MR2029919} implies 
that $\R$ contains $\WedgesOfSpheres$. 
The same theorem now implies that 
every simply-connected space 
$X$ with $\cl_{\WedgesOfSpheres}(X) < \infty$
is also in $\R$.

\section{Non-Simply-Connected Targets}

Suppose, finally,  that $K$ is not simply-connected and that 
$\pi_1(X)$ has no perfect quotients.
If $f: \s^t X\to K$ is trivial on fundamental groups, then 
there is a lift in the diagram
\[
\xymatrix{
&& \twdl K \ar[d]^p
\\
\s^t X \ar[rr]^-f\ar@/^/@{..>}[rru] && K
}
\]
where $p$ is the universal cover of $K$.  Since $K$
is finite-dimensional, so  is $\twdl K$, and 
so $f \simeq *$.  On  the other hand, if $f$ is nontrivial 
on fundamental groups, then we write $G = \im ( f_*)$
and consider the covering space $q: L \to K$ 
corresponding to the subgroup $G \sseq \pi_1(K)$.
Again, we have a lift in the diagram
\[
\xymatrix{
&& L \ar[d]^q
\\
\s^t X \ar[rr]^-f\ar@/^/@{..>}[rru]^\phi  && K
}
\]
and $\phi$ induces a surjection on 
fundamental groups.  But since $G$ is not perfect, there
is a nontrivial map $u: L \to K(A,1)$ (which must be nonzero on 
fundamental groups)
for some abelian group $A$.
Thus $\phi$ is nonzero on cohomology and so
 $\s \phi : \s^{t+1} X\to \s L$ is nontrivial.   

To finish the proof, we observe that if $L \to K$ is a 
covering with $L$ path-connected, then forming the pullback
squares
\[
\xymatrix{
L_0 \ar[d]\ar[r] & L_1\ar[d] \ar[r] & \cdots \ar[r] & L_{n-1}\ar[d]\ar[r]
& L_n \ar@{=}[r] \ar[d] & L\ar[d]
\\
K_0 \ar[r] & K_1 \ar[r] 
& \cdots \ar[r] & K_{n-1}\ar[r] & K_n \ar@{=}[r] & K
}
\]
over a $\WedgesOfSpheres$-cone decomposition of $K$ yields
a $\WedgesOfSpheres$-cone decomposition of $L$.
%
%
%
%
Thus the nontriviality of $\s \phi$
   contradicts Theorem \ref{thm:main}(7) 
because $\s L$ is simply-connected and 
$\cl_{\WedgesOfSpheres}(\s L) 
 \leq \cl_{\WedgesOfSpheres}(L) <  \infty$.


\begin{bibsection}
\begin{biblist}


\bib{MR0085509}{article}{
   author={Barratt, M. G.},
   author={Whitehead, J. H. C.},
   title={The first nonvanishing group of an $(n+1)$-ad},
   journal={Proc. London Math. Soc. (3)},
   volume={6},
   date={1956},
   pages={417--439},
   issn={0024-6115},
   review={\MR{0085509 (19,52c)}},
}

\bib{MR0038654}{article}{
   author={Blakers, A. L.},
   author={Massey, W. S.},
   title={The homotopy groups of a triad. I},
   journal={Ann. of Math. (2)},
   volume={53},
   date={1951},
   pages={161--205},
   issn={0003-486X},
   review={\MR{0038654 (12,435e)}},
}

\bib{MR0044836}{article}{
   author={Blakers, A. L.},
   author={Massey, W. S.},
   title={The homotopy groups of a triad. II},
   journal={Ann. of Math. (2)},
   volume={55},
   date={1952},
   pages={192--201},
   issn={0003-486X},
   review={\MR{0044836 (13,485f)}},
}
   

\bib{MR682060}{article}{
   author={Carlsson, Gunnar},
   title={G. B. Segal's Burnside ring conjecture for $({\bf Z}/2)^{k}$},
   journal={Topology},
   volume={22},
   date={1983},
   number={1},
   pages={83--103},
   issn={0040-9383},
   review={\MR{682060 (84a:55007)}},
   doi={10.1016/0040-9383(83)90046-0},
}

\bib{MR0179791}{article}{
   author={Ganea, T.},
   title={A generalization of the homology and homotopy suspension},
   journal={Comment. Math. Helv.},
   volume={39},
   date={1965},
   pages={295--322},
   issn={0010-2571},
   review={\MR{0179791 (31 \#4033)}},
}

\bib{MR0281202}{article}{
   author={Gray, Brayton},
   title={A note on the Hilton-Milnor theorem},
   journal={Topology},
   volume={10},
   date={1971},
   pages={199--201},
   issn={0040-9383},
   review={\MR{0281202 (43 \#6921)}},
}

\bib{MR0334198}{article}{
   author={Gray, Brayton},
   title={On the homotopy groups of mapping cones},
   journal={Proc. London Math. Soc. (3)},
   volume={26},
   date={1973},
   pages={497--520},
   issn={0024-6115},
   review={\MR{0334198 (48 \#12517)}},
}

\bib{MR546361}{article}{
   author={Harper, John R.},
   title={$H$-spaces with torsion},
   journal={Mem. Amer. Math. Soc.},
   volume={22},
   date={1979},
   number={223},
   pages={viii+72},
   issn={0065-9266},
   review={\MR{546361 (80k:55033)}},
}



\bib{MR0073181}{article}{
    author={James, I. M.},
     title={Reduced product spaces},
   journal={Ann. of Math. (2)},
    volume={62},
      date={1955},
     pages={170\ndash 197},
      issn={0003-486X},
    review={MR0073181 (17,396b)},
}

\bib{MR1179079}{article}{
   author={Lannes, Jean},
   title={Sur les espaces fonctionnels dont la source est le classifiant
   d'un $p$-groupe ab\'elien \'el\'ementaire},
   language={French},
   note={With an appendix by Michel Zisman},
   journal={Inst. Hautes \'Etudes Sci. Publ. Math.},
   number={75},
   date={1992},
   pages={135--244},
   issn={0073-8301},
   review={\MR{1179079 (93j:55019)}},
}

\bib{MR0226637}{book}{
   author={Massey, W. S.},
   author={Peterson, F. P.},
   title={The ${\rm mod}\ 2$ cohomology structure of certain fibre spaces},
   series={Memoirs of the American Mathematical Society, No. 74},
   publisher={American Mathematical Society},
   place={Providence, R.I.},
   date={1967},
   pages={97},
   review={\MR{0226637 (37 \#2226)}},
}

\bib{MR1361910}{article}{
   author={McGibbon, C. A.},
   title={Phantom maps},
   conference={
      title={Handbook of algebraic topology},
   },
   book={
      publisher={North-Holland},
      place={Amsterdam},
   },
   date={1995},
   pages={1209--1257},
   review={\MR{1361910 (96i:55021)}},
   doi={10.1016/B978-044481779-2/50026-2},
}

\bib{MR1357793}{article}{
   author={McGibbon, C. A.},
   author={Steiner, Richard},
   title={Some questions about the first derived functor of the inverse
   limit},
   journal={J. Pure Appl. Algebra},
   volume={103},
   date={1995},
   number={3},
   pages={325--340},
   issn={0022-4049},
   review={\MR{1357793 (98c:20098)}},
   doi={10.1016/0022-4049(94)00107-T},
}

\bib{MR764593}{article}{
   author={Miller, Haynes},
   title={Massey-Peterson towers and maps from classifying spaces},
   conference={
      title={Algebraic topology, Aarhus 1982},
      address={Aarhus},
      date={1982},
   },
   book={
      series={Lecture Notes in Math.},
      volume={1051},
      publisher={Springer},
      place={Berlin},
   },
   date={1984},
   pages={401--417},
   review={\MR{764593 (86b:55011)}},
   doi={10.1007/BFb0075581},
}

\bib{MR750716}{article}{
   author={Miller, Haynes},
   title={The Sullivan conjecture on maps from classifying spaces},
   journal={Ann. of Math. (2)},
   volume={120},
   date={1984},
   number={1},
   pages={39--87},
   issn={0003-486X},
   review={\MR{750716 (85i:55012)}},
   doi={10.2307/2007071},
}

\bib{MR1282727}{book}{
   author={Schwartz, Lionel},
   title={Unstable modules over the Steenrod algebra and Sullivan's fixed
   point set conjecture},
   series={Chicago Lectures in Mathematics},
   publisher={University of Chicago Press},
   place={Chicago, IL},
   date={1994},
   pages={x+229},
   isbn={0-226-74202-4},
   isbn={0-226-74203-2},
   review={\MR{1282727 (95d:55017)}},
}

\bib{MR2029919}{article}{
   author={Strom, Jeffrey},
   title={Miller spaces and spherical resolvability of finite complexes},
   journal={Fund. Math.},
   volume={178},
   date={2003},
   number={2},
   pages={97--108},
   issn={0016-2736},
   review={\MR{2029919 (2005b:55026)}},
   doi={10.4064/fm178-2-1},
}

\bib{MR0494074}{book}{
   author={Sullivan, Dennis},
   title={Geometric topology. Part I},
   note={Localization, periodicity, and Galois symmetry;
   Revised version},
   publisher={Massachusetts Institute of Technology, Cambridge, Mass.},
   date={1971},
   pages={432 pp. (not consecutively paged)},
   review={\MR{0494074 (58 \#13006a)}},
}

\bib{MR0075589}{article}{
   author={Toda, Hirosi},
   title={Complex of the standard paths and $n$-ad homotopy groups},
   journal={J. Inst. Polytech. Osaka City Univ. Ser. A.},
   volume={6},
   date={1955},
   pages={101--120},
   review={\MR{0075589 (17,773b)}},
}

\bib{MR516508}{book}{
   author={Whitehead, George W.},
   title={Elements of homotopy theory},
   series={Graduate Texts in Mathematics},
   volume={61},
   publisher={Springer-Verlag},
   place={New York},
   date={1978},
   pages={xxi+744},
   isbn={0-387-90336-4},
   review={\MR{516508 (80b:55001)}},
}


\end{biblist}
\end{bibsection}

\end{document}